\def\calA{{\mathcal A}}  
\def\calD{{\mathcal D}}  
\def\calG{{\mathcal G}}  
\def\calM{{\mathcal M}}  
\def\calP{{\mathcal P}}  
\def\calV{{\mathcal V}}
\newcommand{\CE}{{\mathsf{CE}}}
\newcommand{\R}{\mathbb{R}}
\newcommand{\N}{\mathbb{N}}
\newcommand{\sfE}{\mathsf{E}}
\newcommand{\sfG}{\mathsf{G}}
\newcommand{\sfL}{\mathsf{L}}
\newcommand{\sfV}{\mathsf{V}}
\DeclarePairedDelimiter{\skp}{\langle}{\rangle}
\newtheorem{theorem}{Theorem}[section]
\newtheorem{proposition}[theorem]{Proposition}
\newtheorem{assumption}[theorem]{Assumption}
\newtheorem{definition}[theorem]{Definition}
\newtheorem{remark}[theorem]{Remark}
\title{Dynamic Optimal Transport with optimal star shaped graphs}
\author[1]{Marcello Carioni}
\author[2]{Juliane Krautz}
\author[2,3]{Jan-F. Pietschmann}
\affil[1]{{
\small
Department of Applied Mathematics, University of Twente, P.O. Box 217, 7500 AE Enschede,
The Netherlands. Email: m.c.carioni@utwente.nl
}}
\affil[2]{{
\small
Universit\"{a}t Augsburg, Institut f\"ur Mathematik, Universit\"{a}tsstra\ss e 12a, 86159 Augsburg, Germany. Emails: \{juliane.krautz, jan-f.pietschmann\}@uni-a.de
}}
\affil[3]{{
\small
Centre for Advanced Analytics and Predictive Sciences (CAAPS), University of Augsburg,
Universit\"{a}tsstr. 12a, 86159 Augsburg, Germany. 
}}
\date{}
\begin{document}
\maketitle
\begin{abstract}
We study an optimal transport problem in a compact convex set $\Omega\subset\R^d$ where bulk transport is coupled to dynamic optimal transport on a metric graph $ \sfG  = (\sfV,\sfE)$ which is embedded in $\Omega$. We prove existence of solutions for fixed graphs. Next, we consider varying graphs, yet only for the case of star-shaped ones. Here, the action functional is augmented by an additional penalty that prevents the edges of the graph to overlap. This allows to preserve the graph topology and thus to rely on standard techniques in Calculus of Variations in order to show existence of minimizers.

\end{abstract}

\section{Introduction}

Metric graphs play an important role in the modeling of real-world phenomena such as gas or road networks, \cite{Fazeny2024,Coclite2005}. In particular, the theory of optimal transport has proven to be a useful tool to analyze transportation on such graphs. In recent years, several related formulations have been studied, see e.g. \cite{Erbar2022}, where the authors provide a complete analysis of optimal transport on metric graphs with Kirchhoff-Neumann conditions at the vertices. Furthermore, 
\cite{BurgerHumpertPietschmann2023} studies a transport metric that allows for mass storage on the vertices. These studies are strongly motivated by and related to a transport distance involving bulk and surface transport that was introduced in \cite{monsaingeon_new_2021}.
Related gradient flows with respect to these distances have also been studied, again in \cite{Erbar2022} and, for the case with vertex dynamics, in \cite{heinze_gradient_2024}. Independently, dynamic optimal transportation with non-linear mobilities was studied in \cite{dolbeault_new_2009}, see also \cite{Lisini2010} for the case of a volume filling mobility.

In the present article, we combine these approaches to a model which consists of transport in a compact and convex domain $\Omega\subset\R^d$, coupled to optimal transport on a metric graph. The introduction of mobilities allows to impose lower and upper bounds on the mass densities, leading to a more versatile model.
The coupled problem is motivated by planning of traffic routes. A particular focus lies on networks where, due to pre-existing infrastructure or connections to other transportation networks, one city is highlighted as a central point. We assume that traveling along the graph is cheaper than traveling in the bulk domain. However, there is an additional cost to entering the graph, that can be thought of as the cost of train tickets or waiting times between different connections.
An example of such a system can be found in French railroad planning, where Paris is associated to the accentuated center of the graph. 
This article extends the results of \cite{carioni2025dynamicoptimaltransportoptimal}, where a coupled dynamic optimal transport problem between bulk and a second domain, which is the graph of a function (a single road) is analyzed. The resulting problem is understood as a dynamic optimal transportation problem together with an additional penalty functional allowing for an optimization over the metric graph as well. 

To introduce our model, we first consider an arbitrary metric graph $\sfG=(\sfV,\sfE,l)$ that is embedded in some convex and compact domain $\Omega\subset\R^d$ with initial and final data given as non-negative Radon measures $\mu_0$ and $\mu_1$ on $\Omega$ and $\rho_0$ and $\rho_1$ on $\sfG$ such that $\mu_0(\Omega) + \rho_0(\sfG) = 1 = \mu_1(\Omega) + \rho_1(\sfG)$. This means that each vertex $v\in\sfV$ can be identified with a point $x_v\in\Omega$ and we think of edges as straight lines connecting these points. For a rigorous definition see Section \ref{notation}. Any measure $\rho_t$ on $\sfG$ can be determined by its values on the edges, thus we identify $\rho_t = (\rho_t^e)_{e\in\sfE}$.
Now, the evolution of mass on the coupled system can be expressed, formally, by solutions of the following system of continuity equations:
\begin{equation}\label{eq:FormalSystem}
    \left\{
    \begin{aligned}
        \partial_t \mu_t + \nabla\cdot J_t &= 0 &&\text{in } \Omega\\
        J_t\cdot n^e_\Omega &= f_t^e &&\text{for } e\in\sfE\\
        \partial_t \rho^e_t + \nabla\cdot V^e_t &= f_t^e &&\text{for } e\in\sfE\\
        V^e_t  n^v_e &= G(v) = {J_t}_{|\{x_v\}}\cdot \tau_e &&\text{for } v \in \sfV_{\rm out} \\
        \sum_{e\in \sfE(v)} V^e_t  n^v_e &= 0 &&\text{for } v\in\sfV\setminus\sfV_{\rm out} 
    \end{aligned}\right.
\end{equation}
where $n^e_\Omega\in\mathbb{S}^{n-1}$ denotes the normal to the edges $e\in\sfE$, $\tau_e\in\mathbb{S}^{n-1}$ denotes the tangent to the embedded edge in $\Omega$ and $n^v_e\in\{0,\pm1\}$ is defined as in \eqref{eq:EdgeNormal}, prescribing an orientation to each edge. For an illustration see \autoref{fig:GraphNormalTang}. In particular, we impose a non-homogeneous continuity equation on each edge, coupling them in the interior vertices $\sfV\setminus\sfV_{\rm out}$ with homogeneous Neumann conditions, allowing for conservation of mass. For the outer vertices $\sfV_{\rm out}\subset \sfV$, we allow an additional exchange of mass with the bulk region $\Omega$. We denote by $\CE(\sfG)$ the set of weak solutions to \eqref{eq:FormalSystem}.
Now, the evolution of this system is governed by quadratic costs and non-linear mobilities imposing bounds on the mass measures $\mu_t$ and $\rho_t$. The resulting (formal) action functional reads as
\begin{align*}
    \calA_\sfG(\mu_t, J_t, \rho_t, V_t, f_t, G_t) =& \int_\Omega \frac{|J_t|^2}{m_\Omega(\mu_t)}  \, d\lambda_\Omega + \sum\limits_{e\in \sfE} \left(\int_e  \alpha_1 \frac{|V^e_t|^2}{m_\sfG(\rho^e_t)}  + \alpha_2 \frac{|f^e_t|^2}{m_\sfG(\rho^e_t)}  \, d\lambda_e \right)\\
    &+ \alpha_3 \sum\limits_{v\in\sfV_{\rm out}} \sum\limits_{e\in\sfE(v)} \int_{\{x_{v}\}} \frac{|G^e_t(x_v)|^2}{m_\sfG({\rho^e_t}_{|x_v})} \, d\lambda_{v} ,
\end{align*}
where $m_\Omega$ and $m_\sfG$ are mobility functions and $\lambda_\Omega$, $\lambda_e$ and $\lambda_v$ are fixed reference measures, necessary as the mobilities lead to a loss of $1$-homogenity of the functional. We then consider the dynamic problem 
\begin{align*}
    \inf_{(\mu_t, J_t, \rho_t, V_t, f_t, G_t)\in\CE(\sfG)} \calA_\sfG(\mu_t, J_t, \rho_t, V_t, f_t, G_t).
\end{align*}
We refer to Section \ref{fixedgraph} for a rigorous definition of weak solutions to the coupled system \eqref{eq:FormalSystem} and for the resulting dynamic problem.

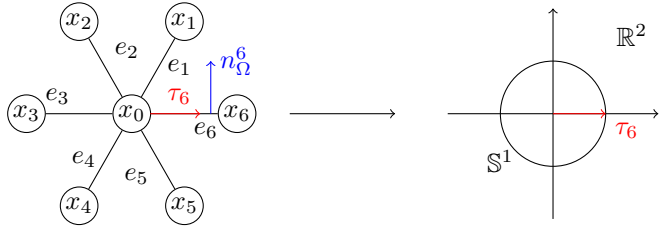
\begin{figure}
    \centering
    \begin{tikzpicture}[scale=.7]
    \node[circle, draw=black, minimum size=5mm,inner sep=0pt] (C) at (0,0) {$x_0$};

    \node[circle, draw=black, minimum size=5mm,inner sep=0pt] (A) at (1,1.75) {$x_1$};
    \node[circle, draw=black, minimum size=5mm,inner sep=0pt] (B) at (2,0) {$x_6$};
    \node[circle, draw=black, minimum size=5mm,inner sep=0pt] (D) at (1,-1.75) {$x_5$};
    \node[circle, draw=black, minimum size=5mm,inner sep=0pt] (E) at (-1,-1.75) {$x_4$};
    \node[circle, draw=black, minimum size=5mm,inner sep=0pt] (F) at (-2,0) {$x_3$};
    \node[circle, draw=black, minimum size=5mm,inner sep=0pt] (G) at (-1,1.75) {$x_2$};

    \draw (C) -- (A) node[midway, right] {$e_1$};
    \draw (C) -- (B) node[midway, below right] {$e_6$};
    \draw (C) -- (D) node[midway, below left] {$e_5$};
    \draw (C) -- (E) node[midway, left] {$e_4$};
    \draw (C) -- (F) node[midway, above left] {$e_3$};
    \draw (C) -- (G) node[midway, above right] {$e_2$};

    \draw[->, blue] (1.5,0) -- (1.5,1) node[right] {$n_\Omega^6$};
    \draw[->, red] (0.35,0) -- (1.3,0) node[above left] {$\tau_6$};

    \draw[->] (3,0) -- (5,0);

    \draw[->] (8,-2) -- (8,2);
    \draw[->] (6,0) -- (10,0);
    \node (R) at (9.5,1.5) {$\R^2$};
    \draw (8,0) circle (1);
    \node (S) at (7,-.9) {$\mathbb{S}^1$};
    \draw[->, red] (8,0) -- (9,0) node[below right] {$\tau_6$};
     
    \end{tikzpicture}
    \caption{A star-shaped metric graph embedded into $\R^2$ with $\sfV_{\rm out} = \{v_1,\ldots,v_6\} = \sfV\setminus \{v_0\}$, normal $n_\Omega^6 = (0,1)^T$ and tangent $\tau_6 = (1,0)^T$.}
    \label{fig:GraphNormalTang}
\end{figure}

In order to account for varying graphs in the second part of this work, we restrict our analysis to the case of star-shaped metric graphs. For such, there exists a unique central vertex $v_0\in\sfV$ which is contained in all edges and we define $\sfV_{\rm out} = \sfV\setminus\{v_0\}$. Thus, all edges can be written as $e=(v_0,v)$ for some $v\in \sfV_{\rm out}$. 
Throughout minimization, we fix the connectivity information and only optimize over the placement of the vertices. For this to result in a well-defined metric graph, we need to prevent vertices from colliding and edges from crossing. To this end we introduce the additional penalty functional 
\begin{align*}
    R(x_{v_0},\ldots,x_{N_\sfV-1}) := \sum\limits_{i=1}^{N_\sfV-1} \left( \frac{1}{| x_{v_i} - x_{v_0} |} + \frac{1}{2}\sum\limits_{\genfrac{}{}{0pt}{}{j=1}{j\neq i}}^{N_\sfV-1} \frac{1}{\alpha_{x_{v_i},x_{v_j}}}\right)
\end{align*}
where $N_V = | \sfV|$ and $\alpha_{x_{v_i},x_{v_j}}$ is the unsigned angle between the edges $(v_0, v_i)$ and $(v_0,v_j)$ at the central vertex $v_0$. The augmented problem now reads 
\begin{align*}
    \inf\limits_{\sfG} \inf\limits_{\CE(\sfG)} \int_0^1 \calA_\sfG(\mu_t, J_t, \rho_t, V_t, f_t, G_t) \, dt + c R(x_{v_0}, x_{v_1}, \ldots, x_{v_{N_\sfV-1}})
\end{align*}
and its rigorous definition is given in Section~\ref{varyinggraphs}. 

The article is structured as follows. In Section \ref{notation} we introduce some notation as well as measures on metric graphs. In Section \ref{fixedgraph} we define the notion of weak solutions to the coupled system as well as the dynamic formulation. Additionally, we prove existence of minimizers. 
In the last part, in Section \ref{varyinggraphs}, we analyze the dynamic problem for varying graphs where we restrict our considerations to star-shaped metric graphs and optimize over the placement of vertices. Again, we prove existence of minimizers.

\section{Notation}\label{notation}

In order to rigorously define the coupled dynamic system, we start by introducing some notation. 

Let $\Omega\subset\R^d$ be a compact and convex subset of $\R^d$ and $\sfG  = (\sfV,\sfE)$ be a combinatorial graph, where $\sfV$ denotes the \emph{set of nodes} and $\sfE$ is the \emph{set of edges} $e = (v,w)$ for $v,w\in \sfV $. The number of nodes is denoted by $N_\sfV := |\sfV|$ and the number of edges by $N_\sfE := | \sfE|$. We define $\sfE(v) := \{e\in\sfE\, | \, v\in e\}$ as the set of edges containing $v\in\sfV$. Moreover, we introduce the set of \emph{outer vertices} $\sfV_{\rm out} := \{v\in\sfV \, | \, |\sfE(v)| = 1\}$ and the set of \emph{outer edges} $\sfE_{\rm out} := \{e\in\sfE \, | \, \exists~ v\in \sfV_{\rm out} \text{ s.t. } v\in e \}$.
With an additional map $l:\sfE\to(0,+\infty)$, associating a positive length $l_e$ to each edge $e$ of the graph, the combinatorial graph turns into a \emph{metric graph} $\sfG=(\sfV,\sfE,l)$. By defining the outer normal 
\begin{align}\label{eq:EdgeNormal}
    n_e^v := \begin{cases}
        -1 &:  e=(v,w) \\ 0 & :   v\notin e\\ +1 &:  e=(w,v)
    \end{cases}
\end{align}
and therefore fixing an orientation, we can identify edges with closed intervals $[0,l_e]$. In order to rigorously define the coupling between a domain and a metric graph, we need to embed the graph. 
\begin{definition}\label{def:Embedding}
    Let $\sfG=(\sfV, \sfE)$ be a combinatorial graph and $\Omega\subset\R^d$ be a compact and convex set. A collection of distinct points $x_v\in\Omega$ for $v\in\sfV$ is called an \emph{embedding} of the graph, defining edges as straight lines, and we say that the graph $\sfG$ is \emph{embedded into $\Omega$} if the embedding has no intersecting edges. 
    We say that a metric graph $\sfG = (\sfV, \sfE, l)$ is \emph{embedded into $\Omega$} if it is embedded as a combinatorial graph and if $|x_w - x_v| = l_e$ for all $e\in \sfE $.
\end{definition}

 For an embedded metric graph we denote the tangent vector of an edge $e=(v,w)\in\sfE$ by
\begin{align}
    \tau_e := \frac{x_w-x_v}{l_e} = \frac{x_w-x_v}{|x_w-x_v|} 
\end{align}
and we write $[x_v,x_w] = \{x_v + t \tau_e \, | \, t\in[0,l_e]\}\subset\Omega$ for the interval defined by an edge $e=(v,w)$ of an embedded metric graph. Abusing notation, we identify $e=[x_v,x_w]$ throughout the article.

Functions on metric graphs are defined by their restriction to each edge. We introduce the sets 
\begin{align}
\Omega_\sfE := \bigsqcup_{e=(v,w)\in\sfE} [x_v, x_w] ~
\text{,} \quad \sfL := \bigsqcup_{e=(v,w)\in\sfE} [0,1] \quad \text{and} \quad \Omega_\sfV := \bigsqcup\limits_{v\in\sfV_{\rm out}} \{x_v\}
\end{align}
given as disjoint unions.
In order to guarantee a metric structure on the metric graph and therefore identifying it with a metric space, we define the new sets
\begin{align*}
    \Omega_\sfG := \Omega_\sfE\big/\sim_\sfE \quad \text{and} \quad \sfL_\sfG := \sfL\big/\sim_\sfL,
\end{align*}
where $\sim_\sfE$ denotes the equivalence relation that identifies the same vertex on different edges and $\sim_\sfL$ is the corresponding identification in $\sfL$. Note that in $\Omega_\sfG$ each vertex $v\in\sfV$ can be identified uniquely, whereas there are multiple copies in $\Omega_\sfE$, one for each edge $e\in\sfE(v)$.
Any map on a metric graph is given as $\varphi=(\varphi^e)_{e\in \sfE}:\Omega_\sfE\to\R$ or $\Tilde{\varphi}=(\Tilde{\varphi}^e)_{e\in \sfE}:\sfL\to\R$. Both can be related using the transformation maps $\gamma_e(s) := x_v + s l_e\tau_e$ and $\Tilde{\gamma}_e(x) = \frac{|x-x_v|}{l_e}$ for $e=(v,w)$ as 
\begin{align}
    \Tilde{\varphi}^e(s) = \varphi^e(\gamma_e(s)) \quad \text{and} \quad \varphi^e(x) = \Tilde{\varphi}^e(\Tilde{\gamma}_e(x)).
\end{align}
Similar definitions can be given for $\Omega_\sfG$ and $\sfL_\sfG$.
Finally, we define $C^1([0,1]\times L) := C^1([0,1];C(\sfL_\sfG))\cap C([0,1]; C^1(\sfL))$, i.e. functions that are continuous in each vertex having derivatives on each edge.

We call a (metric or combinatorial) graph \emph{star-shaped} if there exists an accentuated node $v_0\in \sfV $ such that $ E  = \{(v_0,v)\,|\, v\in  \sfV \setminus\{v_0\}\}$. The node $v_0$ is called \emph{center of the graph}. In particular it holds that $\sfE = \sfE(v_0) = \sfE_{\rm out}$ and $\sfV_{\rm out} = \sfV\setminus\{v_0\}$. 

In order to rigorously formulate the coupling between different domains, we need to extend measures defined on the graph to measures on $\Omega$. 
Let $\calM(U,\mathbb{R}^N)$ be the set of Radon measures on $U \subset \R^d$ with values in $\mathbb{R}^N$ and $\calM_+(U, \mathbb{R}^N)$ the set of measures in $\calM(U,\R^N)$ that are non-negative. Moreover, we denote by  $\calM_{[0,1]}(U, \R^N)$ a Borel measurable family of measures in $U$ indexed by $t\in[0,1]$ with values in $\R^N$. 
We also consider measures with values in the tangent space of some edge $e \in \sfE$. In particular, given $x \in e$ the measures $V^e\in \calM(e, T_xe)$ and $G^e(x_v)\in\calM(\{x_v\}, T_{x_v}e)$ will occur in our formulation, where $T_xe$ is the tangent space to $e$ in $x$. Note that we can represent such measures as
\begin{align}
    V^e = \tau_e \calV^e \quad \text{or} \quad G^e(x_v) = \tau_e \calG^e(x_v) 
\end{align}
with $\calV^e\in \calM(e,\mathbb{R})$,  $\calG^e(x_v) \in \calM(\{x_v\},\mathbb{R})$. For any measure $\rho^e\in\calM(e,\R)$ we define its zero extension into $\Omega$ by duality as the measure $\Bar\rho^e \in \calM(\Omega, \R)$ such that
\begin{align}
    \int_\Omega \phi \, d\Bar\rho^e = \int_e \phi \, d\rho^e
\end{align}
for all $\phi\in C(\Omega)$. For measures $\rho\in\calM(\Omega_\sfE,\R)$ we define their extension as $\Bar\rho = (\Bar\rho^e)_{e\in \sfE}$ and for $\Tilde{\rho}\in \calM(\sfL,\R)$ as $\overline{\Tilde{\rho}} = (\overline{\Tilde{\rho}^e\circ\Tilde{\gamma}_e})_{e\in \sfE}$.
Measures $\rho\in\calM(\Omega_\sfE, \R)$ can be identified with measures $\Tilde{\rho}\in \calM(\sfL,\R)$ through the push-forward by the maps $\gamma_e$ for $e \in E$. In particular, when it will be necessary to underline the dependence on $\gamma_e$ we will write $\tilde \rho = \rho \circ \gamma_e$.
Similar notions of extension and push-forward  operations can be defined for measures defined on $\Omega_\sfG$ and $\sfL_\sfG$ with values in $\R^N$ for $N \in \mathbb{N}$.

With these notions, we are able to introduce the space of admissible tuples $(\mu_t, J_t, \rho_t, V_t, f_t, G_t)$ as the space $\calD_{\rm adm}(\sfG)$ defined by 
\begin{align*}
    \calD_{\rm adm}(\sfG) := &\calM_{[0,1]}(\Omega, \R)\times\calM_{[0,1]}(\Omega, \R^d)\times  \calM_{[0,1]}(\Omega_\sfG, \R)\times  \calM_{[0,1]}(\Omega_\sfE, T_xe)\\
    &\times \calM_{[0,1]}( \Omega_\sfE, \R)\times  \calM_{[0,1]}(\Omega_\sfV, T_{x_v}e).
\end{align*}
Admissible initial or final measures are given as elements of the set 
\begin{align}
    \calP_{\rm adm}(\sfG) := \left\{(\mu, \rho) \in \calM_+(\Omega)\times\calM_+(\Omega_\sfG) \, | \, \mu(\Omega) + \rho(\Omega_\sfG) = 1\right\}.
\end{align}

\section{Fixed Graph}\label{fixedgraph}

In this section we consider a fixed metric graph $\sfG=(\sfV,\sfE,l)$, embedded in $\Omega\subset\R^d$ compact and convex. We rigorously define the dynamic problem and show existence of solutions to the formal transport problem \eqref{eq:FormalSystem} with minimal dynamic costs. 

\subsection{Continuity equation}
We rigorously define the coupled continuity equations and show mass conservation of the entire system. 

\begin{definition}\label{WeakFormCE}
    We say that a tuple $(\mu_t, J_t, \rho_t, V_t, f_t, G_t)\in\calD_{\rm adm}(\sfG)$ satisfies the coupled continuity equations for admissible initial and final data $(\mu_0, \rho_0), (\mu_1, \rho_1) \in \calP_{\rm adm}(\sfG)$ if 
    \begin{align*}
        &\int_0^1 \int_\Omega \partial_t \phi_t(x) \, d\mu_t dt + \int_0^1 \int_\Omega \nabla \phi_t(x) \, d J_t dt 
        - \sum\limits_{e\in\sfE}\int_0^1 \int_0^1 \phi_t(\gamma_e(s)) \, d \Tilde{f}^e_t dt 
       \\
       & \qquad \qquad + \sum\limits_{v\in\sfV_{\rm out}}\sum\limits_{e\in \sfE(v)}\int_0^1 \int_{\{x_v\}} \phi_t(x_v)n^v_e\, d\Tilde{\calG}^e_t(x_v) dt 
        = \int_\Omega \phi_1(x)\, d\mu_0 - \int_\Omega \phi_0(x)\, d\mu_1
    \end{align*}
    for all $\phi_t\in C^1([0,1]\times\Omega)$
    and 
    \begin{align*}
        &\sum\limits_{e\in \sfE}\left(\int_0^1 \int_0^1 \partial_t \psi_t^e(s) \, d\Tilde{\rho}^e_t dt + \int_0^1\int_0^1 \nabla\psi_t^e(s) l_e \, d\Tilde{\calV}^e_t dt + \int_0^1\int_0^1 \psi_t^e(s) \, d\Tilde{f}^e_t dt\right) 
        \\
        & \qquad \qquad  -\sum\limits_{v\in\sfV_{\rm out}}\sum\limits_{e\in \sfE(v)}\int_0^1 \int_{\{x_v\}} \psi^e_t(x_v)n^v_e\, d\Tilde{\calG}^e_t(x_v) dt
        = \sum\limits_{e\in \sfE} \left(\int_0^1 \psi_1^e(s)\, d\Tilde{\rho}^e_0 - \int_0^1 \psi_0^e(s)\, d\Tilde{\rho}^e_1 \right)
    \end{align*}
    for all $\psi_t=(\psi_t^e)_{e\in \sfE}\in C^1([0,1]\times L)$ such that $\psi^e_t(\Tilde{\gamma}_e(x_v)) =: \psi_t(v)$ for all $e\in\sfE(v)$, meaning that the map is continuous over vertices. We denote by $\CE(\sfG)$ the set of such solutions on the graph $\sfG$.
\end{definition}

We can show that solutions to this system satisfy a global continuity equation and therefore are mass preserving.

\begin{proposition}
    Suppose that $(\mu_t, J_t, \rho_t, V_t, f_t, G_t)\in\CE(\sfG)$. Given $\eta_t := \mu_t + \sum_{e\in \sfE}\Bar \rho^e_t$ and $W_t := J_t + \sum_{e\in \sfE}\Bar V^e_t$ it holds that 
    \begin{align}
        \partial_t \eta_t + \nabla\cdot W_t = 0
    \end{align}
    weakly with initial data $\mu_0 + \sum_{e\in \sfE}\Bar \rho^e_0$ and final data $\mu_1 + \sum_{e\in \sfE}\Bar \rho^e_1$.
\end{proposition}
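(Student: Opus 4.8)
The plan is to derive the global continuity equation by testing the two coupled equations of Definition~\ref{WeakFormCE} against \emph{compatible} test functions and adding them, so that the source term $f_t$ and the vertex-exchange term $G_t$ cancel. Concretely, the weak statement I aim to establish is
\begin{align*}
\int_0^1\int_\Omega \partial_t\phi_t \, d\eta_t \, dt + \int_0^1\int_\Omega \nabla\phi_t \, dW_t \, dt = \int_\Omega \phi_1 \, d\eta_0 - \int_\Omega \phi_0 \, d\eta_1
\end{align*}
for every $\phi_t \in C^1([0,1]\times\Omega)$, where $\eta_0 = \mu_0 + \sum_{e\in\sfE}\bar\rho_0^e$ and $\eta_1 = \mu_1 + \sum_{e\in\sfE}\bar\rho_1^e$.

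First I would fix an arbitrary $\phi_t\in C^1([0,1]\times\Omega)$ and define its restriction to the graph by $\psi_t^e := \phi_t\circ\gamma_e$ for each $e\in\sfE$, i.e. $\psi_t^e(s) = \phi_t(\gamma_e(s))$. Since $\phi_t$ is a single continuous function on $\Omega$, all edges meeting at a vertex $v$ yield the same value $\psi_t^e(\tilde\gamma_e(x_v)) = \phi_t(x_v)$, so $\psi_t = (\psi_t^e)_{e\in\sfE}$ is continuous across vertices; together with the chain rule it lies in $C^1([0,1]\times L)$ and is therefore an admissible test function for the second equation of Definition~\ref{WeakFormCE}. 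I would then write down both weak equations with this pair $(\phi_t,\psi_t)$ and add them.

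The key observation is the cancellation of the coupling terms. The source contribution $\sum_{e\in\sfE}\int_0^1\int_0^1 \phi_t(\gamma_e(s)) \, d\tilde f_t^e \, dt$ appears with a minus sign in the bulk equation, while $\sum_{e\in\sfE}\int_0^1\int_0^1 \psi_t^e(s) \, d\tilde f_t^e \, dt$ appears with a plus sign in the graph equation; since $\psi_t^e(s) = \phi_t(\gamma_e(s))$ these are equal and cancel. Likewise the vertex terms $\sum_{v\in\sfV_{\rm out}}\sum_{e\in\sfE(v)}\int_0^1\int_{\{x_v\}}\phi_t(x_v) n_e^v \, d\tilde\calG_t^e(x_v) \, dt$ and its counterpart with $\psi_t^e(x_v)=\phi_t(x_v)$ enter with opposite signs and cancel. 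What remains are the time-derivative and flux contributions. Using the extension identity $\int_0^1 g(\gamma_e(s)) \, d\tilde\rho_t^e = \int_\Omega g \, d\bar\rho_t^e$ and the fact that $\gamma_e$ is time-independent (so $\partial_t\psi_t^e = (\partial_t\phi_t)\circ\gamma_e$), the time-derivative terms add up to $\int_0^1\int_\Omega \partial_t\phi_t \, d\eta_t \, dt$; similarly, using the chain rule $\nabla\psi_t^e = l_e(\nabla\phi_t\circ\gamma_e)\cdot\tau_e$, the representation $V_t^e = \tau_e\calV_t^e$ and the definition of the zero-extension $\bar V_t^e$, the flux terms combine into $\int_0^1\int_\Omega \nabla\phi_t \, dW_t \, dt$. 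Finally, substituting $\psi_1^e=\phi_1\circ\gamma_e$ and $\psi_0^e=\phi_0\circ\gamma_e$ into the two right-hand sides and invoking the same extension identity turns them into $\int_\Omega\phi_1 \, d\eta_0 - \int_\Omega\phi_0 \, d\eta_1$, which identifies the initial and final data.

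I expect the main obstacle to be the careful bookkeeping in the flux identification: matching $\int_0^1 \nabla\psi_t^e(s) \, l_e \, d\tilde\calV_t^e$ with $\int_\Omega\nabla\phi_t \, d\bar V_t^e$ requires tracking the reparametrisation factor $l_e$ arising from $\gamma_e'(s)=l_e\tau_e$ against the push-forward relating $\tilde\calV_t^e$ on $[0,1]$ to $\calV_t^e$ on the edge $e$, and confirming that the scalar/tangential decomposition $V_t^e=\tau_e\calV_t^e$ is propagated consistently through the extension. The remaining steps — admissibility of the restricted test function and the cancellation of the coupling terms — are conceptually straightforward once the matching choice $\psi_t^e = \phi_t\circ\gamma_e$ is made, and the arbitrariness of $\phi_t$ then yields the global continuity equation with the stated initial and final data.
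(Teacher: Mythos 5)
Your proposal is correct and follows essentially the same route as the paper: restrict the bulk test function to the graph via $\psi_t^e=\phi_t\circ\gamma_e$, check admissibility (continuity across vertices), add the two weak formulations so that the $f_t$ and $G_t$ coupling terms cancel, and identify the remaining terms with the global continuity equation via the zero-extension and the chain rule $\frac{d}{ds}\psi_t^e(s)=\nabla\phi_t(\gamma_e(s))\cdot l_e\tau_e$. The paper's proof leaves the cancellation implicit, whereas you spell it out; this is the only (cosmetic) difference.
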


\begin{proof}
    Let $\phi\in C^1([0,1]\times\Omega)$ be an arbitrary test function. For each edge $e \in \sfE$ we define $\psi^e_t = \phi_t\circ \gamma_e\in C^1([0,1]\times[0,1])$ which, by construction, induces the admissible test function $\psi_t = (\psi_t^e)_{e\in \sfE} \in C^1([0,1]\times \sfL_\sfG)$. Moreover, $\frac{d}{ds}\psi_t^e(s) = \nabla\phi_t(\gamma_e(s))\cdot l_e\tau_e$. We obtain 
    \begin{align*}
        &\int_0^1 \int_\Omega \partial_t \phi_t(x) \, d\eta_t dt + \int_0^1 \int_\Omega \nabla \phi_t(x) \, d W_t dt \\
        =& \int_0^1 \int_\Omega \partial_t \phi_t(x) \, d\mu_t dt + \int_0^1 \int_\Omega \nabla \phi_t(x) \, d J_t dt + \sum\limits_{e\in \sfE} \left[ \int_0^1 \int_0^1 \partial_t \psi_t(s) \, d\Tilde{\rho}^e_t dt + \int_0^1 \int_0^1 \nabla \psi_t(s) \cdot l_e\tau_e\, d  \Tilde{V}^e_t dt \right]\\
        =& \int_\Omega \phi_1(x) \, d\mu_1 - \int_\Omega \phi_0(x) \, d\mu_0 + \sum\limits_{e\in \sfE} \left( \int_0^1 \psi^e_1(s) \, d\tilde{\rho}^e_1 - \int_0^1 \psi^e_0(s) \, d\Tilde{\rho}^e_0\right)
    \end{align*}
    where the last equality follows by substituting the weak formulation from of Definition \ref{WeakFormCE}, thus proving the statement.
\end{proof}

\subsection{Dynamic formulation}

We will define the variational formulation that governs the evolution of the measures $(\mu_t, J_t, \rho_t, V_t, f_t, G_t)\in\CE(\sfG)$ over time. It is given as a generalized kinetic energy functional with an additional mobility function defining upper and lower bounds on the mass densities.

\begin{definition}[Admissible mobilities]
    We call a function $m:[0,+\infty)\to[0,+\infty)\cup\{-\infty\}$ \emph{admissible mobility} if it is an upper semi-continuous and concave function with ${\rm{int}}({\rm{dom}}(m)) = (a,b)$ for $0\leq a<b$ and $m(z)>0$ for all $z\in (a,b)$.
\end{definition}

We are now able to rigorously introduce the dynamic problem. 

\begin{definition}
    We define the following variational problem
    \begin{align}\label{VarFormFixedG}\tag{BB}
        \inf\limits_{\CE(\sfG)} \int_0^1 \calA_\sfG(\mu_t, J_t, \rho_t, V_t, f_t, G_t) \, dt,
    \end{align}
    where
    \begin{align*}
        &\calA_\sfG(\mu_t, J_t, \rho_t, V_t, f_t, G_t) \\
        =& \int_\Omega \Psi_\Omega\left(\frac{d \mu_t}{d \lambda_\Omega}, \frac{d J_t}{d \lambda_\Omega} \right) \, d\lambda_\Omega 
        + \sum\limits_{e\in \sfE} \left(\int_e  \alpha_1  \Psi_\sfG\left(\frac{d \rho^e_t}{d \lambda_e}, \frac{d \calV^e_t}{d \lambda_e} \right) + \alpha_2 \Psi_\sfG\left(\frac{d \rho^e_t}{d \lambda_e}, \frac{d f^e_t}{d \lambda_e} \right) \, d\lambda_e \right)\\
        +& \alpha_3 \sum\limits_{v\in\sfV_{\rm out}} \sum\limits_{e\in\sfE(v)} \int_{\{x_{v}\}} \Psi_\sfG\left(\frac{d \rho^e_t |_{x_v}}{d \lambda_{v}}, \frac{d \calG^e_t(x_v)}{d \lambda_{v}} \right)\, d\lambda_{v} 
    \end{align*}
    for $\alpha_1, \alpha_2, \alpha_3>0$ describing the cost of transport along the graph ($\alpha_1$) and entering or leaving the graph ($\alpha_2, \alpha_3$) respectively. 
    Here, $\lambda_\Omega\in\calM_+(\Omega)$, $\lambda_e\in\calM_+(e)$ and $\lambda_v\in\calM_+(\{x_v\})$ are non-negative Radon-measures such that $\mu_t, J_t \ll\lambda_\Omega$,  $\rho_t^e, \calV_t^e, f_t^e \ll\lambda_e$ and $\rho_t^e|_{x_v}, \calG_t^e(x_v) \ll \lambda_v$ for almost all $t\in[0,1]$. The action functionals are defined as 
    \begin{align}
        \Psi_\Omega(u,v) := \begin{cases}
            \frac{|v|^2}{m_\Omega(u)} &: m_\Omega(u)\neq 0\\
            0 &: m_\Omega(u) = 0 = v\\
            +\infty &:\text{else}
        \end{cases}
        \quad \text{and} \quad
        \Psi_\sfG(u,v) := \begin{cases}
            \frac{|v|^2}{m_\sfG(u)} &: m_\sfG(u)\neq 0\\
            0 &: m_\sfG(u) = 0 = v\\
            +\infty &:\text{else}
        \end{cases}
    \end{align}
    for admissible mobility functions $m_\Omega$ and $m_\sfG$.
\end{definition}

\begin{remark}\label{lscFixedG}
    As shown in \cite[Theorem 2.1]{dolbeault_new_2009}, functionals of the form \ref{VarFormFixedG} are convex and lower semi-continuous with respect to the weak convergence of measures.
\end{remark}

Boundedness of the action $\calA_\sfG$ allows proving further regularity results for the mass densities $\mu_t$ and $\rho_t^e$, $e\in \sfE$ following the same arguments as in \cite[Theorem 3.5]{monsaingeon_new_2021}.

\begin{proposition}\label{densityBnd}
    For $(\mu_t, J_t, \rho_t, V_t, f_t, G_t)\in\CE(\sfG)$ with $\int_0^1 \calA_\sfG(\mu_t, J_t, \rho_t, V_t, f_t, G_t) \, dt< + \infty$ the measures $\mu_t$ and $\rho_t$ admit a weakly continuous representative. Suppose further that ${\rm{int}}({\rm{dom}}(m_\Omega)) = (a_\Omega, b_\Omega)$ and ${\rm{int}}({\rm{dom}}(m_\sfG)) = (a_\sfG, b_\sfG)$ and denote by $\mu_t$, $\rho^e_t$ the densities of the measures with respect to $\lambda_\Omega$ and $\lambda_\sfG$. It holds that
    \begin{align}
        a_\Omega \leq \mu_t \leq b_\Omega, \, \lambda_\Omega-{\rm{a.e.}} \quad \text{and} \quad a_\sfG \leq \rho^e_t \leq b_\sfG, \, \lambda_\sfG-\rm{a.e.}
    \end{align}
    for every $t\in[0,1]$ and every $e\in \sfE$. 
\end{proposition}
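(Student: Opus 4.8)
The plan is to first prove the density bounds for almost every time and then upgrade them to every time using the weak continuity. Since $\calA_\sfG(\mu_t,J_t,\rho_t,V_t,f_t,G_t)$ is a sum of nonnegative terms and $\int_0^1\calA_\sfG\,dt<+\infty$, the integrand is finite for a.e.\ $t\in[0,1]$, and hence so is each of its summands.

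First I would fix a time $t$ at which the integrand is finite and read off the slicewise bounds. Recalling (Remark~\ref{lscFixedG}) that $\Psi_\Omega$ is the jointly convex, lower semicontinuous integrand of Dolbeault--Nazaret--Savar\'e associated with the concave, upper semicontinuous mobility $m_\Omega$, it equals $+\infty$ whenever its first argument lies outside $\mathrm{dom}(m_\Omega)\subseteq[a_\Omega,b_\Omega]$, and its recession function vanishes only at the origin. The recession behaviour (forced by the bounded domain of $m_\Omega$) rules out any singular part, so $\mu_t\ll\lambda_\Omega$; pointwise finiteness of $\Psi_\Omega\big(\tfrac{d\mu_t}{d\lambda_\Omega},\tfrac{dJ_t}{d\lambda_\Omega}\big)$ then forces $\tfrac{d\mu_t}{d\lambda_\Omega}(x)\in[a_\Omega,b_\Omega]$ for $\lambda_\Omega$-a.e.\ $x$. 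Applying the same reasoning edge by edge to the terms weighted by $\alpha_1$ and $\alpha_2$ yields $\rho_t^e\ll\lambda_\sfG$ together with $a_\sfG\le\tfrac{d\rho_t^e}{d\lambda_\sfG}\le b_\sfG$ $\lambda_\sfG$-a.e., for a.e.\ $t$ and every $e\in\sfE$.

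Next I would establish the weak continuity. Since $\tfrac{d\mu_t}{d\lambda_\Omega}\in[a_\Omega,b_\Omega]$ for a.e.\ $t$ and $m_\Omega$ is bounded above on the compact interval $[a_\Omega,b_\Omega]$ by some $M$, Cauchy--Schwarz gives
\[
|J_t|(\Omega)\le\Big(\int_\Omega\Psi_\Omega\big(\tfrac{d\mu_t}{d\lambda_\Omega},\tfrac{dJ_t}{d\lambda_\Omega}\big)\,d\lambda_\Omega\Big)^{1/2}\Big(\int_\Omega m_\Omega\big(\tfrac{d\mu_t}{d\lambda_\Omega}\big)\,d\lambda_\Omega\Big)^{1/2}\le C\,\calA_\sfG(t)^{1/2},
\]
which is integrable in $t$; the same estimate bounds $|V_t^e|(e)$, $|f_t^e|(e)$ and the vertex fluxes $|\calG_t^e|(\{x_v\})$ in terms of the corresponding summands and is therefore integrable in $t$ as well. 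Inserting the test functions $\phi_t(x)=\chi(t)\phi(x)$ and $\psi_t^e(s)=\chi(t)\psi^e(s)$ into the two weak formulations of Definition~\ref{WeakFormCE}, one sees that for each fixed $\phi\in C^1(\Omega)$ (resp.\ admissible $\psi$) the maps $t\mapsto\int_\Omega\phi\,d\mu_t$ and $t\mapsto\int_0^1\psi^e\,d\tilde\rho_t^e$ are absolutely continuous, with time derivative controlled by the total masses of $J_t,f_t,\calG_t$ (resp.\ $V_t^e,f_t^e,\calG_t^e$). A standard argument over a countable $C^1$-dense family of test functions then produces weakly-$*$ continuous representatives of $\mu_t$ and $\rho_t^e$ coinciding with the given ones for a.e.\ $t$.

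Finally I would transfer the bounds to every $t$. The constraints $a_\Omega\lambda_\Omega\le\mu\le b_\Omega\lambda_\Omega$ and $a_\sfG\lambda_\sfG\le\rho^e\le b_\sfG\lambda_\sfG$ are closed under weak-$*$ convergence, because the inequalities $a\int\phi\,d\lambda\le\int\phi\,d\mu\le b\int\phi\,d\lambda$ persist in the limit for every nonnegative $\phi\in C$. For an arbitrary $t\in[0,1]$ I would pick times $t_n\to t$ at which the slice bounds hold; weak continuity gives $\mu_{t_n}\rightharpoonup\mu_t$ and $\rho_{t_n}^e\rightharpoonup\rho_t^e$, and closedness of the constraints yields the asserted bounds at $t$. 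I expect the main obstacle to be the first step, namely pinning down the correct relaxed meaning of $\Psi_\Omega$ and $\Psi_\sfG$ — in particular the role of their recession functions — so that finiteness of the action simultaneously excludes singular parts and confines the densities to $[a_\Omega,b_\Omega]$ and $[a_\sfG,b_\sfG]$; once the slice bounds and the Cauchy--Schwarz flux estimates are in place, the continuity and the a.e.-to-everywhere upgrade are routine.
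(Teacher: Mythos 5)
Your argument is correct and is essentially the proof the paper intends: the paper gives no proof of Proposition~\ref{densityBnd}, deferring entirely to \cite[Theorem 3.5]{monsaingeon_new_2021}, and your three steps (confining the densities to $[a_\Omega,b_\Omega]$ and $[a_\sfG,b_\sfG]$ for a.e.\ $t$ from finiteness of the integrand, Cauchy--Schwarz flux bounds fed into the weak continuity equation to produce a weakly-$*$ continuous representative, then upgrading the bounds to every $t$ by weak-$*$ closedness of the constraint) are exactly that argument. The only point to watch is that your recession-function step excluding singular parts requires $b_\Omega,b_\sfG<+\infty$, which the definition of admissible mobilities does not guarantee; this is harmless here because the paper's definition of $\calA_\sfG$ already stipulates $\mu_t\ll\lambda_\Omega$ and $\rho^e_t\ll\lambda_e$, and the upper bounds are vacuous when $b=+\infty$.
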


\subsection{Existence of minimizers for a fixed graph}

In this section we will show well-posedness of \ref{VarFormFixedG} using the direct method of calculus of variations. From Remark \ref{lscFixedG} we infer lower semicontinuity. Compactness will follow from a Hölder estimate. 

\begin{proposition}[Hölder estimate]\label{HoelderFixedG}
    Let us suppose that $\int_0^1 \calA_\sfG(\mu_t, J_t, \rho_t, V_t, f_t, G_t) \, dt \leq M$ for a given $(\mu_t, J_t, \rho_t, V_t, f_t, G_t)\in \CE(\sfG)$ and a constant $M\geq 0$. Then, it holds that 
    \begin{align}
        \| \mu_t - \mu_s\|_{C^1(\Omega)^*} &\leq C(b_\Omega, b_\sfG, M) |t-s|^\frac{1}{2}\\
        \sum\limits_{e\in \sfE}\| \rho^e_t - \rho^e_s\|_{C^1(e)^*} &\leq C(b_\Omega, b_\sfG, M) |t-s|^\frac{1}{2}
    \end{align}
    for a constant $C(b_\Omega, b_\sfG, M)>0$ and for all $t,s\in[0,1]$.
\end{proposition}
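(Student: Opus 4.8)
The plan is to test the weak formulation of Definition~\ref{WeakFormCE} with time-independent spatial test functions, localized in time by a cutoff, and then to bound the resulting flux terms by the action through two applications of the Cauchy--Schwarz inequality: one in space (against the reference measure) and one in time (producing the exponent $\tfrac12$).

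First I would record the consequences of the density bounds. Since Proposition~\ref{densityBnd} gives $a_\Omega\le\mu_t\le b_\Omega$ and $a_\sfG\le\rho^e_t\le b_\sfG$, and since an admissible mobility is upper semicontinuous and concave, hence bounded above on the compact range, there are finite constants $C_\Omega=\max_{[a_\Omega,b_\Omega]}m_\Omega$ and $C_\sfG=\max_{[a_\sfG,b_\sfG]}m_\sfG$. For a.e.\ $t$, Cauchy--Schwarz against the reference measure then controls the total variation of each flux by the pointwise action, e.g.
\[
|J_t|(\Omega)^2\le C_\Omega\,\lambda_\Omega(\Omega)\int_\Omega\Psi_\Omega\Big(\tfrac{d\mu_t}{d\lambda_\Omega},\tfrac{dJ_t}{d\lambda_\Omega}\Big)\,d\lambda_\Omega,
\]
and analogously for $|\calV^e_t|(e)$, $|f^e_t|(e)$ and $|\calG^e_t(x_v)|(\{x_v\})$ in terms of the $\alpha_1,\alpha_2,\alpha_3$ contributions to $\calA_\sfG$. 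On the set where the mobility vanishes the corresponding flux vanishes by finiteness of the action, so these estimates are unaffected there.

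Second I would localize in time. Fixing $\phi\in C^1(\Omega)$ and testing with $\phi_t(x)=\phi(x)\zeta(t)$ for $\zeta\in C^1_c((0,1))$ shows that $t\mapsto\int_\Omega\phi\,d\mu_t$ has distributional derivative $\int_\Omega\nabla\phi\,dJ_t$ minus the coupling terms carried by $\tilde f^e_t$ and $\tilde\calG^e_t$; by the total-variation bounds this derivative lies in $L^2(0,1)$, so, using the weakly continuous representative of Proposition~\ref{densityBnd}, the map is absolutely continuous and
\[
\int_\Omega\phi\,d(\mu_t-\mu_s)=\int_s^t\Big[\int_\Omega\nabla\phi\,dJ_\tau-\sum_e\int_0^1\phi(\gamma_e)\,d\tilde f^e_\tau+\sum_{v}\sum_{e}\int_{\{x_v\}}\phi(x_v)n^v_e\,d\tilde\calG^e_\tau\Big]\,d\tau.
\]
Bounding each term by $\|\phi\|_{C^1(\Omega)}$ times the corresponding total variation, applying Cauchy--Schwarz in $\tau$ over $[s,t]$, and using $\int_0^1\calA_\sfG\,d\tau\le M$ produces the factor $|t-s|^{1/2}$; taking the supremum over $\|\phi\|_{C^1(\Omega)}\le1$ gives the first estimate. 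The graph estimate follows in exactly the same way from the second identity of Definition~\ref{WeakFormCE}, testing with vertex-continuous tuples $(\psi^e)_e$ and controlling the edge-transport, in/out-flux and vertex terms by the $\alpha_1,\alpha_2,\alpha_3$ parts of the action, then summing over $e$.

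I expect the main obstacle to be bookkeeping rather than a single hard estimate. The normal flux $f$ and the vertex flux $G$ appear in \emph{both} the bulk and the edge equations with opposite signs, so one must verify that each is absorbed with the correct $|t-s|^{1/2}$ scaling; and the vertex-continuity constraint on the admissible test tuples $(\psi^e)_e$ must not obstruct recovering the per-edge dual norms in $\sum_e\|\rho^e_t-\rho^e_s\|_{C^1(e)^*}$. The latter is handled by using, for each edge, test functions vanishing at the vertices (which are trivially vertex-continuous) for the interior contribution, while the vertex values are accounted for through the already controlled $\calG$ terms. A secondary technical point is justifying the absolute continuity of $t\mapsto\int_\Omega\phi\,d\mu_t$ from the distributional identity, which relies on the weakly continuous representative guaranteed by Proposition~\ref{densityBnd}.
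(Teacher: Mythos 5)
Your overall strategy is exactly the one the paper invokes (the proof there is by reference to \cite[Proposition 2.10]{carioni2025dynamicoptimaltransportoptimal} and \cite[Proposition 3.5]{monsaingeon_new_2021}): bound the total variations $|J_t|(\Omega)$, $|\calV^e_t|(e)$, $|f^e_t|(e)$, $|\calG^e_t|(\{x_v\})$ by $C\,\calA_\sfG(t)^{1/2}$ via Cauchy--Schwarz against the reference measures, using that the admissible mobilities are bounded on $[a_\Omega,b_\Omega]$ resp.\ $[a_\sfG,b_\sfG]$ by Proposition~\ref{densityBnd}, and then integrate the distributional identity for $t\mapsto\int\phi\,d\mu_t$ over $[s,t]$ and apply Cauchy--Schwarz in time. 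This correctly yields the bulk estimate and, for vertex-continuous test tuples, the estimate $\bigl|\sum_e\int\psi^e\,d(\tilde\rho^e_t-\tilde\rho^e_s)\bigr|\le C|t-s|^{1/2}$, which is what the compactness argument in Proposition~\ref{CompFixedG} actually consumes (weak convergence in $\calM(\Omega_\sfG)$ is tested against functions continuous across vertices).

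The gap is in your reduction of the \emph{per-edge} dual norms $\|\rho^e_t-\rho^e_s\|_{C^1(e)^*}$ to this estimate. Your fix --- test with functions vanishing at the vertices and recover the vertex values ``through the already controlled $\calG$ terms'' --- works at outer vertices but fails at interior vertices such as the centre $v_0$ of a star: in Definition~\ref{WeakFormCE} the measures $\calG^e$ appear only for $v\in\sfV_{\rm out}$, and at an interior vertex the weak formulation encodes only the Kirchhoff condition $\sum_{e\in\sfE(v)}V^e_t n^v_e=0$ through the continuity constraint on the test tuple. Concretely, writing $\psi^e=\psi^e_0+\psi^e(x_{v_0})\theta$ with $\psi^e_0(x_{v_0})=0$ and $\theta$ a fixed bump at $v_0$, the term $\int_e\theta\,d(\rho^e_t-\rho^e_s)$ is, up to controlled quantities, the change of mass of the single edge $e$; duality against vertex-continuous tuples only controls the \emph{sum} of these quantities over $e\in\sfE(v_0)$, never each one individually, so the argument is circular there. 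To close it one needs an extra ingredient beyond duality --- e.g.\ a localization near $v_0$ exploiting the density bound $\rho^e_t\le b_\sfG$ to limit how fast mass can cross an interior vertex --- or one should weaken the second estimate to the dual norm over $C^1$ functions on the glued space $\Omega_\sfG$, which your argument does prove and which suffices for the sequel. (Note that the cited references do not meet this issue: in \cite{carioni2025dynamicoptimaltransportoptimal} the ``graph'' is a single edge and in \cite{monsaingeon_new_2021} there is no vertex coupling, so the interior-vertex case genuinely requires an additional argument.)
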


\begin{proof}
    The statement can be proven analogously to \cite[Proposition 2.10]{carioni2025dynamicoptimaltransportoptimal} following arguments from \cite[Proposition 3.5]{monsaingeon_new_2021}. 
\end{proof}
From the previously stated Hölder estimates we can infer compactness. 

\begin{proposition}[Compactness]\label{CompFixedG}
    Given $(\mu^n_t, J^n_t, \rho^n_t, V^n_t, f^n_t, G^n_t)\in \CE(\sfG)$, $n\in \N$ and $M\geq 0$ such that 
    \begin{align}
        \sup\limits_{n\in\N} \int_0^1 \calA_\sfG(\mu^n_t, J^n_t, \rho^n_t, V^n_t, f^n_t, G^n_t) \, dt \leq M,
    \end{align}
    there exists $(\mu_t, J_t, \rho_t, V_t, f_t, G_t)\in \CE(\sfG)$ such that, up to a subsequence, the following convergences hold:
    \begin{enumerate}[label=\roman*)]
        \item $\mu_t^n \rightharpoonup \mu_t$ weakly in $\calM(\Omega)$ for all $t\in[0,1]$ \label{mu}
        \item ${\rho}_t^n \rightharpoonup \rho_t$ weakly in $\calM(\Omega_\sfG)$ for all $t\in[0,1]$ \label{rho} 
        \item $J_t^n \rightharpoonup J_t$ weakly in $\calM([0,1]\times\Omega, \R^d)$ \label{J}
        \item $\calV_t^n \rightharpoonup\calV_t$ in $\calM([0,1]\times \Omega_\sfE)$\label{V}  
        \item $f^n_t \rightharpoonup f_t$ in $\calM([0,1]\times \Omega_\sfE)$ \label{f}
        \item $\calG^n_t \rightharpoonup \calG_t$ in $\calM\left([0,1]\times\Omega_{\rm V}\right)$.\label{G}
    \end{enumerate}
\end{proposition}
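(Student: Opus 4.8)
The plan is to use the direct method: from the uniform action bound I would extract a subsequence converging as claimed, relying on three ingredients already available, namely the density bounds of Proposition~\ref{densityBnd}, the equi-Hölder estimate of Proposition~\ref{HoelderFixedG}, and total-variation bounds on the flux variables coming from the action via Cauchy--Schwarz. First I would note that, since $\int_0^1 \calA_\sfG\,dt \le M < \infty$, Proposition~\ref{densityBnd} applies to every $n$ and gives $0 \le \mu^n_t \le b_\Omega$ and $0 \le \rho^{e,n}_t \le b_\sfG$ uniformly in $n$ and $t$. Because $\lambda_\Omega$ and $\lambda_e$ are finite (Radon on compact sets), this yields a uniform bound on the masses $\|\mu^n_t\|_{\calM(\Omega)}$ and $\|\rho^n_t\|_{\calM(\Omega_\sfG)}$. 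In particular, for each fixed $t$ the values $\{\mu^n_t\}_n$ lie in a fixed weak-$*$ compact, hence metrizable, ball of $\calM(\Omega)$, and likewise for $\rho$.

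For the flux variables I would estimate the total variation over the space--time domain by a pointwise Cauchy--Schwarz inequality: for instance
\[
\int_\Omega |J^n_t|\,d\lambda_\Omega \le \left(\int_\Omega \Psi_\Omega(\mu^n_t,J^n_t)\,d\lambda_\Omega\right)^{1/2}\left(\int_\Omega m_\Omega(\mu^n_t)\,d\lambda_\Omega\right)^{1/2},
\]
where the first factor is controlled by the action and the second is bounded since $m_\Omega$, being upper semi-continuous and concave, is bounded on the compact density range $[a_\Omega,b_\Omega]$ and $\lambda_\Omega$ is finite. Integrating in $t$ and applying Cauchy--Schwarz once more bounds $\|J^n\|_{\calM([0,1]\times\Omega)}$ uniformly by a constant depending on $M$, $b_\Omega$ and $\lambda_\Omega(\Omega)$. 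The same computation for $\calV^n$, $f^n$ and $\calG^n$ using $m_\sfG$ gives uniform total-variation bounds on all remaining flux measures, and Banach--Alaoglu then yields weak-$*$ convergent subsequences, producing items \ref{J}--\ref{G}.

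For items \ref{mu} and \ref{rho} I would combine the per-time compactness above with the equi-Hölder continuity in $t$ from Proposition~\ref{HoelderFixedG} through a refined Arzelà--Ascoli argument. On the weak-$*$ compact mass ball, metrized by $\|\cdot\|_{C^1(\Omega)^*}$, the maps $t\mapsto\mu^n_t$ are uniformly $\tfrac12$-Hölder; a diagonal extraction over a countable dense set of times, together with equicontinuity, upgrades this to convergence $\mu^n_t\rightharpoonup\mu_t$ for \emph{every} $t\in[0,1]$, with the limit inheriting weak continuity in $t$. The identical argument on each edge, summed over $\sfE$, gives \ref{rho}. It then remains to verify $(\mu_t,J_t,\rho_t,V_t,f_t,G_t)\in\CE(\sfG)$ by passing to the limit in the weak continuity equations of Definition~\ref{WeakFormCE}: every term is linear in the unknowns, the flux and boundary terms pair the space--time weak-$*$ limits against fixed test functions (or their compositions with $\gamma_e$, $\tilde\gamma_e$), and the terminal data are fixed, so these converge directly. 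The one term needing extra care, $\int_0^1\!\int_\Omega \partial_t\phi_t\,d\mu^n_t\,dt$ and its graph analogue, is handled using the pointwise-in-$t$ weak convergence together with the uniform mass bound and dominated convergence in $t$.

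I expect the principal obstacle to be precisely this refined Arzelà--Ascoli step: obtaining convergence for every $t\in[0,1]$ (rather than merely almost everywhere or in a time-averaged sense) and cleanly identifying a single weakly continuous limit $\mu_t,\rho_t$. This is what makes the time-derivative term in the continuity equation pass to the limit and is the crux of the compactness; the flux bounds and the final linear limiting procedure are comparatively routine.
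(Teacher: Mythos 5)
Your proposal is correct and follows essentially the same route as the paper: the density bounds of Proposition~\ref{densityBnd} together with the H\"older estimate of Proposition~\ref{HoelderFixedG} feed into a generalized Arzel\`a--Ascoli argument for \ref{mu} and \ref{rho}, while Cauchy--Schwarz against the action yields the total-variation bounds and hence weak-$*$ (Prokhorov) compactness for the flux variables, with admissibility of the limit following by linearity of the weak continuity equations. The paper simply delegates these steps to references (Ambrosio--Gigli--Savar\'e for the Arzel\`a--Ascoli step, and a prior work for the total-variation estimates), whereas you spell them out.
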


\begin{proof} 
    The proof of \ref{mu} and \ref{rho} follows from Proposition \ref{HoelderFixedG}, Proposition \ref{densityBnd} and an application of a generalized Arzela-Ascoli theorem, see \cite[Proposition 3.3.1]{ambrosio_gradient_2008}.
    The remaining convergence results follow by standard estimates on the total variation of the measures and Prokhorovs theorem carried out in \cite[Theorem 2.11]{carioni2025dynamicoptimaltransportoptimal}.
    Admissibility of the limit is a direct consequence of the weak convergence and disintegration properties.
\end{proof}

We are now in a position to show well-posedness of \ref{VarFormFixedG}. 

\begin{theorem}
    Suppose that \ref{VarFormFixedG} is finite. Then, there exists a minimizer $(\mu_t, J_t, \rho_t, V_t, f_t, G_t)\in \CE(\sfG)$.
\end{theorem}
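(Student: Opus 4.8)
The plan is to apply the direct method of the calculus of variations, which at this point reduces to assembling the two ingredients that the preceding propositions have already isolated. First I would take a minimizing sequence $(\mu^n_t, J^n_t, \rho^n_t, V^n_t, f^n_t, G^n_t)\in\CE(\sfG)$ for \ref{VarFormFixedG}, that is, a sequence along which the action converges to the infimum. Since \ref{VarFormFixedG} is assumed finite, for $n$ large enough the actions are bounded by some constant $M$, so the sequence satisfies the hypothesis of Proposition~\ref{CompFixedG}.

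Next I would invoke Proposition~\ref{CompFixedG} to extract a (not relabelled) subsequence converging, in the six respective weak topologies listed there, to a limit tuple $(\mu_t, J_t, \rho_t, V_t, f_t, G_t)$. The proposition already asserts that this limit lies in $\CE(\sfG)$, so the admissibility of the candidate minimizer is handled for free; the continuity equations pass to the limit because they are linear in the unknowns and the test functions in Definition~\ref{WeakFormCE} are fixed, smooth, and compactly supported, so each term is a continuous linear functional against the relevant weak convergence.

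It then remains to pass the cost to the limit. Here I would appeal to Remark~\ref{lscFixedG}, which records that the integral functional in \ref{VarFormFixedG} is lower semicontinuous with respect to weak convergence of measures (via the convexity and joint lower semicontinuity of $\Psi_\Omega$ and $\Psi_\sfG$, following \cite[Theorem 2.1]{dolbeault_new_2009}). Applying lower semicontinuity to each of the bulk, edge, and vertex contributions along the convergent subsequence gives
\begin{align*}
    \int_0^1 \calA_\sfG(\mu_t, J_t, \rho_t, V_t, f_t, G_t)\,dt \leq \liminf_{n\to\infty} \int_0^1 \calA_\sfG(\mu^n_t, J^n_t, \rho^n_t, V^n_t, f^n_t, G^n_t)\,dt = \inf \ref{VarFormFixedG}.
\end{align*}
Since the limit tuple is admissible, the reverse inequality holds trivially, so it is a minimizer.

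The step I expect to require the most care is not the abstract argument but the verification that lower semicontinuity applies cleanly to the full functional, in particular to the vertex terms supported on the singletons $\{x_v\}$ and to the sum over edges. The mobility functionals $\Psi_\sfG$ act on Radon–Nikodym densities with respect to the fixed reference measures $\lambda_\Omega$, $\lambda_e$, $\lambda_v$, and one must ensure that the weak convergences of Proposition~\ref{CompFixedG} are compatible with these disintegrations so that \cite[Theorem 2.1]{dolbeault_new_2009} genuinely applies term by term — especially for the vertex contribution, where the convergence $\calG^n_t\rightharpoonup\calG_t$ in $\calM([0,1]\times\Omega_{\rm V})$ must be matched against the correct reference measure $\lambda_v$. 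Once the functional is seen as a finite sum of such jointly convex, weakly lower semicontinuous integrals, the conclusion follows.
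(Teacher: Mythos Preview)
Your proposal is correct and matches the paper's own proof, which simply states that the result is a direct consequence of Remark~\ref{lscFixedG}, Proposition~\ref{CompFixedG}, and the direct method of the calculus of variations. You have merely spelled out the standard three steps (minimizing sequence, compactness, lower semicontinuity) that the paper leaves implicit.
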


\begin{proof}
    This is a direct consequence of Remark \ref{lscFixedG}, Proposition \ref{CompFixedG} and the direct method of calculus of variations.
\end{proof}

\section{Varying Graph}\label{varyinggraphs}

In this section we analyze the case of varying graphs restricting our considerations to star-shaped metric graphs. From now on, we suppose that $\sfG = (\sfV, \sfE, l)$ is star-shaped with central vertex $v_0\in\sfV$. 
We keep the connectivity of the graph fixed and we optimize the graph by varying the placement of the vertices inside of $\Omega$. For given initial and final data we are therefore looking for an optimal embedding of the combinatorial graph. Note that we allow the edge length to vary as well.
In order for the continuity equation to be well-defined throughout the minimization, we need to prevent overlapping edges. For this reason, we impose an additional constraint on the unsigned angle $\alpha_{x_v,x_w}\in [0,2\pi)$ between any pair of edges $(v_0,v)$ and $(v_0,w)$ at the common central node $v_0$. The angle can be calculated as
\begin{align}
    \alpha_{x_v,x_w} = \left|\arccos \frac{\skp{x_v-x_{v_0},x_w-x_{v_0}}}{|x_v-x_{v_0}||x_w-x_{v_0}|} \right| 
\end{align}
and we approach an overlap of two edges if this angle is small.
However, this does not prevent the nodes from collapsing in the center $x_{v_0}$. Therefore, we add an additional coloumb-type potential in order to obtain lower bounds on the distances $|x_v-x_{v_0}|$ for $v\in \sfV$. 
The resulting penalty functional reads
\begin{align}
    R(x_{v_0}, x_{v_1}, \ldots, x_{v_{N_\sfV-1}}) := \sum\limits_{i=1}^{N_\sfV-1} \left( \frac{1}{| x_{v_i} - x_{v_0} |} + \frac{1}{2}\sum\limits_{\genfrac{}{}{0pt}{}{j=1}{j\neq i}}^{N_\sfV-1} \frac{1}{\alpha_{x_{v_i},x_{v_j}}}\right)
\end{align}
inducing the variational problem
\begin{align}\label{VarFormVarG}\tag{BB$_\sfG$}
    \inf\limits_{\sfG \text{ embedded}} \inf\limits_{\CE(\sfG)} \int_0^1 \calA_\sfG(\mu_t, J_t, \rho_t, V_t, f_t, G_t) \, dt + c R(x_{v_0}, x_{v_1}, \ldots, x_{v_{N_\sfV-1}})
\end{align}
as $c>0$ allows for different scalings.
The following properties are direct consequences of the definition of the penalty functional. 

\begin{proposition}\label{penalty_prop}
    Any representation $x_{v_0}, x_{v_1}, \ldots, x_{v_{N_\sfV-1}}\in \Omega$ of a star-shaped metric graph with $R(x_{v_0}, x_{v_1}, \ldots, x_{v_{N_\sfV-1}}) <  + \infty$ is embedded in the sense of Definition \ref{def:Embedding}. In particular, there is no overlap between different edges and there exists a lower bound $d>0$ such that $|x_{v_i}-x_{v_j}|\geq d$ for all $i,j\in\{0,\ldots,N_\sfV-1\}$, $i\neq j$.
\end{proposition}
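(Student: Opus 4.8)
The plan is to show that a finite value of the penalty $R$ forces two separate geometric constraints---positive distance of each outer vertex from the center, and positive pairwise angles between edges---and then to argue that these two together yield the claimed uniform lower bound $d$ on all pairwise distances, which in turn implies the embedding property of Definition~\ref{def:Embedding}. Concretely, I would first observe that each summand of $R$ is non-negative, so $R(x_{v_0},\ldots,x_{v_{N_\sfV-1}})<+\infty$ forces every individual term to be finite. From the terms $\frac{1}{|x_{v_i}-x_{v_0}|}$ I would extract a constant $d_1>0$ with $|x_{v_i}-x_{v_0}|\ge d_1$ for all $i\in\{1,\ldots,N_\sfV-1\}$, and from the terms $\frac{1}{\alpha_{x_{v_i},x_{v_j}}}$ a constant $\delta>0$ with $\alpha_{x_{v_i},x_{v_j}}\ge\delta$ for all $i\neq j$ (both ranging over $1,\ldots,N_\sfV-1$). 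Since the graph is star-shaped, every edge is of the form $(v_0,v)$, so controlling center distances and pairwise angles is exactly controlling the full edge geometry.

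Next I would convert the angle lower bound into a distance lower bound between the outer vertices. For two vertices $x_{v_i},x_{v_j}$ at distances $r_i,r_j\ge d_1$ from $x_{v_0}$ and subtending an unsigned angle $\alpha_{ij}\ge\delta$ at the center, the law of cosines gives
\begin{align*}
    |x_{v_i}-x_{v_j}|^2 = r_i^2 + r_j^2 - 2 r_i r_j \cos\alpha_{ij}.
\end{align*}
Using $r_i,r_j\ge d_1$ and $\cos\alpha_{ij}\le\cos\delta<1$, I would bound this below by $2 d_1^2(1-\cos\delta)>0$, yielding a uniform lower bound $d_2>0$ on $|x_{v_i}-x_{v_j}|$ for all distinct outer pairs. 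Setting $d:=\min\{d_1,d_2\}$ then gives $|x_{v_i}-x_{v_j}|\ge d$ for all $i\neq j$ in $\{0,\ldots,N_\sfV-1\}$, establishing the distance claim.

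It remains to verify that this configuration is an embedding in the sense of Definition~\ref{def:Embedding}: the points are distinct (immediate from $d>0$) and no two edges intersect except at shared endpoints. Here the star-shaped structure is essential: two distinct edges $(v_0,v_i)$ and $(v_0,v_j)$ share only the endpoint $x_{v_0}$, and two segments emanating from a common point with a strictly positive angle $\alpha_{ij}\ge\delta>0$ between them meet only at that common point. I would spell out that an interior intersection would force the two direction vectors $\tau_{e_i},\tau_{e_j}$ to be positively parallel, i.e. $\alpha_{ij}=0$, contradicting $\alpha_{ij}\ge\delta$. Hence no edges overlap and the embedding is valid.

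The main obstacle, modest as it is, lies in handling the angle definition carefully at the boundary of its range: the formula for $\alpha_{x_v,x_w}$ via $\arccos$ is only well-defined once $|x_{v_i}-x_{v_0}|>0$, which is guaranteed by the center-distance bound, and one must note that the relevant degenerate case for overlapping edges is $\alpha_{ij}=0$ (collinear, same side), whereas $\alpha_{ij}=\pi$ (collinear, opposite sides through $v_0$) still gives a valid embedding since the edges only touch at $x_{v_0}$. Since the penalty only blows up as $\alpha_{ij}\to 0$, this is exactly the degeneracy it is designed to exclude, so the argument is consistent; I would simply remark that the $\pi$ case needs no penalty because it already respects the non-crossing requirement.
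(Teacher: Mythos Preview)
Your argument is correct and is precisely the elementary verification the paper has in mind; the paper itself does not give a proof, stating only that these properties are direct consequences of the definition of $R$. Your use of the law-of-cosines identity $|x_{v_i}-x_{v_j}|^2=(r_i-r_j)^2+2r_ir_j(1-\cos\alpha_{ij})\ge 2d_1^2(1-\cos\delta)$ and the observation about the $\alpha_{ij}=\pi$ case make the implicit reasoning explicit without any gaps.
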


\begin{remark}\label{lscVarG}
    If the penalty functional is finite, it is continuous with respect to the strong norm convergence of the vertices. In particular, it is lower semi-continuous. 
\end{remark}

When varying the graph, we need to account for changes in the initial and final data. To do so, let $\eta_0, \eta_1 \in \calP(\Omega)$ be given. 
We define $\rho_k := {\eta_k}_{|\Omega_\sfG}$
and $\mu_k = \eta_k - \Bar \rho_k$ for $k=0,1$. Since
\begin{align}
    \mu_k(\Omega) + \sum\limits_{e\in \sfE} \rho_k^e(e) &= \eta_k(\Omega\setminus\Omega_\sfG) + \eta_k(\Omega_\sfG) = 1
\end{align}
we constructed a pair of admissible initial and final data $(\mu_k, \rho_k)\in \calP_{\rm adm}(\sfG)$, $k=0,1$. In this way, initial and final data can be defined independently of the graph.

For the minimization we need to assume compatibility conditions for the mobilities and reference measures along varying graphs as well. 

\begin{assumption}
We make the following additional assumptions:
\begin{enumerate}
    \item The mobilities $m_\Omega$ and $m_\sfG$ do not depend on the graph.
    \item It holds that $\sup\limits_{\sfG \text{ embedded}} \sum_{e \in \sfE}\| \lambda_e\|_{\calM(e)}<+\infty$ and $\sup\limits_{\sfG \text{ embedded}} \sum_{v \in \sfV}\| \lambda_v\|_{\calM(\{x_v\})}<+\infty$.
    \item For $x_{v_i}^n\to x_{v_i}$ as $n\to\infty$, $i\in\{0,\ldots,N_\sfV-1\}$ it holds that $\lambda_{e^n}\rightharpoonup\lambda_e$ and $\lambda_{v^n}\rightharpoonup\lambda_v$.
\end{enumerate}
\end{assumption}

Under these assumptions, we are able to show a similar compactness result to \autoref{CompFixedG}. 

\begin{proposition}[Compactness for varying graphs]\label{CompVarG}
    Given a family of embedded, star-shaped metric graphs $\sfG^n=(\sfV^n,\sfE^n)$ with nodes $x_{v_i}^n\to x_{v_i}$ as $n\to\infty$, $i\in\{0,\ldots,N_\sfV-1\}$ and measures $(\mu^n_t, J^n_t, \rho^n_t, V^n_t, f^n_t, G^n_t)\in \CE(\sfG^n)$ such that 
    \begin{align}
        \sup\limits_{n\in\N}  \int_0^1 \calA_\sfG(\mu^n_t, J^n_t, \rho^n_t, V^n_t, f^n_t, G^n_t) \, dt + c R(x^n_{v_0}, x^n_{v_1}, \ldots, x^n_{v_{N_\sfV-1}}) < + \infty
    \end{align}
    there exist measures $(\mu_t, J_t, \rho_t, V_t, f_t, G_t)\in \CE(\sfG)$ such that, up to subsequences
    \begin{enumerate}[label=\roman*)]
        \item $\mu_t^n \rightharpoonup \mu_t$ weakly in $\calM(\Omega)$ for all $t\in[0,1]$ \label{vmu}
        \item $\rho_t^n\circ \gamma^n \rightharpoonup \rho_t\circ \gamma$ weakly in $\calM(\sfL_\sfG)$ for all $t\in[0,1]$ \label{vrho} 
        \item $J_t^n \rightharpoonup J_t$ weakly in $\calM([0,1]\times\Omega, \R^d)$ \label{vJ}
        \item $\calV_t^n\circ \gamma^n \rightharpoonup\calV\circ \gamma$ in $\calM([0,1]\times \sfL)$\label{vV}  
        \item $f_t^n\circ \gamma^n \rightharpoonup  f_t\circ \gamma$ in $\calM([0,1]\times \sfL)$ \label{vf} 
        \item $\calG_t^n\circ\gamma^n  \rightharpoonup \calG_t\circ\gamma$ in $\calM([0,1]\times \Omega_\sfV)$ \label{vG}.
    \end{enumerate}
\end{proposition}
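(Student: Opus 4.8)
The plan is to reduce everything to the fixed reference domain $\sfL=\bigsqcup_{e}[0,1]$ and the fixed bulk domain $\Omega$, on which the measures no longer depend on the varying embedding, and then to re-run the fixed-graph compactness argument of Proposition \ref{CompFixedG}. The only genuinely new ingredients are (i) controlling the geometry of the graphs $\sfG^n$ uniformly in $n$ so that the limit configuration is again an embedded star-shaped graph, and (ii) passing to the limit in the weak continuity equation when both the test integrands and the underlying measures depend on $n$ through the transformation maps $\gamma_e^n$.

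First I would fix the limit graph. Since $R(x^n_{v_0},\dots,x^n_{v_{N_\sfV-1}})$ is uniformly bounded, Proposition \ref{penalty_prop} gives a uniform lower bound $d>0$ on the distances $|x^n_{v_i}-x^n_{v_j}|$ and on the pairwise angles, so every $\sfG^n$ is embedded. Because $x^n_{v_i}\to x_{v_i}$ and, by Remark \ref{lscVarG}, $R$ is continuous where finite, the limit points $x_{v_0},\dots,x_{v_{N_\sfV-1}}$ inherit these lower bounds and therefore define an embedded star-shaped metric graph $\sfG$. In particular the edge lengths satisfy $l_{e^n}\to l_e>0$ and the maps $\gamma_e^n(s)=x^n_{v_0}+s\,l_{e^n}\tau_{e^n}$ converge uniformly on $[0,1]$ to $\gamma_e$, a fact I will use repeatedly.

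Next I would establish the uniform estimates on the reference domain. Since, by the standing assumptions, the mobilities $m_\Omega,m_\sfG$ are independent of the graph and the reference masses $\sum_e\|\lambda_{e^n}\|$, $\sum_v\|\lambda_{v^n}\|$ are uniformly bounded with $\lambda_{e^n}\rightharpoonup\lambda_e$ and $\lambda_{v^n}\rightharpoonup\lambda_v$, the density bounds of Proposition \ref{densityBnd} and the H\"older estimate of Proposition \ref{HoelderFixedG} hold with constants independent of $n$ once transported to $\sfL$ and $\Omega$. Concretely, the pulled-back densities $\rho^{e,n}_t\circ\gamma^n_e$ satisfy $a_\sfG\le\rho^{e,n}_t\circ\gamma^n_e\le b_\sfG$ and are $\tfrac12$-H\"older in $t$ in $C^1([0,1])^*$ uniformly in $n$, and likewise for $\mu^n_t$ on $\Omega$. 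Applying the generalized Arzel\`a-Ascoli theorem exactly as in the proof of Proposition \ref{CompFixedG} yields the pointwise-in-$t$ weak limits \ref{vmu} and \ref{vrho}, while the uniform total-variation bounds coming from finiteness of the action, together with Prokhorov's theorem, yield the space-time weak limits \ref{vJ}--\ref{vG} for the flux-type measures, all read on the fixed domains $\sfL$, $\sfL_\sfG$, $\Omega$ and $\Omega_\sfV$.

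The main obstacle is admissibility, i.e. showing that the limit lies in $\CE(\sfG)$. I would fix test functions $\phi\in C^1([0,1]\times\Omega)$ and $\psi\in C^1([0,1]\times L)$ and write the weak formulation of Definition \ref{WeakFormCE} for each $\sfG^n$ entirely in terms of the reference-domain measures. Every term is then an integral of a continuous integrand against one of the weakly convergent measures; the subtlety is that the integrands themselves depend on $n$, namely through $\phi_t\circ\gamma_e^n$, the factor $l_{e^n}$ multiplying $d\tilde\calV^{e,n}_t$, and the point evaluations at the moving vertices $x^n_v$. Because $\gamma_e^n\to\gamma_e$ uniformly, $l_{e^n}\to l_e$, and $\phi_t,\psi_t$ are $C^1$, these integrands converge uniformly to their limit counterparts, so each product passes to the limit by the standard fact that $\int g_n\,d\nu_n\to\int g\,d\nu$ whenever $g_n\to g$ uniformly and $\nu_n\rightharpoonup\nu$. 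The coupling terms at the outer vertices are the most delicate: they pair the moving point masses $\calG^{e,n}_t(x^n_v)$ with $\phi_t(x^n_v)$ and $\psi^e_t$, but the uniform lower bound $d>0$ keeps the vertices separated, so their pull-backs to the fixed set $\Omega_\sfV$ converge weakly and the evaluations $\phi_t(x^n_v)\to\phi_t(x_v)$ together with the combinatorial signs $n^v_{e^n}=n^v_e$ are stable. Passing to the limit in every term then shows that $(\mu_t,J_t,\rho_t,V_t,f_t,G_t)$ satisfies the coupled continuity equations on $\sfG$, hence belongs to $\CE(\sfG)$, which completes the argument.
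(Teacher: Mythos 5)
Your proposal is correct and follows essentially the same strategy as the paper: push the edge- and vertex-based measures forward to the fixed reference domains via $\gamma^n_e$, invoke the fixed-graph compactness of Proposition \ref{CompFixedG} there (using the standing assumptions to make the estimates uniform in $n$), and use the $C^1$-convergence $\gamma^n_e\to\gamma_e$ induced by the vertex convergence to reverse the change of variables and pass to the limit in the weak formulation. Your write-up merely fills in details (the embeddedness of the limit graph via the penalty bound, and the uniform convergence of the $n$-dependent integrands) that the paper leaves implicit.
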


\begin{proof}
    By composition of the measures $\rho^n_t, \calV^n_t, f^n_t$ and $\calG^n_t$ with the change of variables maps $\gamma^n$ through the push-forward operation, the whole sequence of measures is defined on a common domain instead of varying graphs.  
    The remainder of the proof follows as in Theorem \ref{CompFixedG} applied to the resulting push-forward measures. The norm-convergece of the vertices implies $C^1([0,1])$-convergence of the maps $\gamma^n_e$ for all $e\in \sfE$, thus the change of variables can be reversed in the limit, concluding the proof.
\end{proof}

With compactness and lower semi-continuity at hand, we can prove well-posedness.

\begin{theorem}
    Suppose that \ref{VarFormVarG} is finite. Then, there exists an embedded star-shaped metric graph $\sfG=(\sfV,\sfE,l)$ and a family of measures $(\mu_t, J_t, \rho_t, V_t, f_t, G_t)\in \CE(\sfG)$ minimizing \ref{VarFormVarG}.
\end{theorem}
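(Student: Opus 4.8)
The plan is to argue by the direct method of the calculus of variations, now applied to the enlarged variable that couples the embedding of the graph with the transport variables. Since the double infimum in \ref{VarFormVarG} is finite, I would fix a minimizing sequence consisting of embedded star-shaped graphs $\sfG^n$ --- encoded by their vertex positions $x^n_{v_0},\dots,x^n_{v_{N_\sfV-1}}\in\Omega$ --- together with tuples $(\mu^n_t,J^n_t,\rho^n_t,V^n_t,f^n_t,G^n_t)\in\CE(\sfG^n)$ whose objective value converges to the infimum. In particular the sum of the action and the penalty is uniformly bounded in $n$, and since both are nonnegative, each summand is separately bounded.

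First I would extract the limit graph. Each vertex sequence $(x^n_{v_i})_n$ lives in the compact set $\Omega$, so after passing to a subsequence $x^n_{v_i}\to x_{v_i}$ for every $i$. The uniform bound on $R$ together with Proposition~\ref{penalty_prop} keeps the vertices pairwise separated by a fixed $d>0$ and the edge angles bounded below along the whole sequence; passing to the limit and using lower semicontinuity of the penalty (Remark~\ref{lscVarG}) gives $R(x_{v_0},\dots,x_{v_{N_\sfV-1}})\le\liminf_n R(x^n_{v_0},\dots,x^n_{v_{N_\sfV-1}})<+\infty$, whence Proposition~\ref{penalty_prop} guarantees that the limiting vertices define a genuine embedded star-shaped metric graph $\sfG$. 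This is the step that makes the varying-graph problem well posed: the penalty is precisely what prevents the limit from degenerating through colliding vertices or overlapping edges.

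Next I would pass to the limit in the transport variables. Applying Proposition~\ref{CompVarG} to this minimizing sequence yields, up to a further subsequence, a tuple $(\mu_t,J_t,\rho_t,V_t,f_t,G_t)\in\CE(\sfG)$ together with the weak convergences (i)--(vi) stated there, expressed after composition with the change-of-variables maps $\gamma^n$ so that all measures live on the fixed reference domains $\Omega$, $\sfL$ and $\Omega_\sfV$. Crucially the norm convergence of the vertices yields $C^1$-convergence $\gamma^n_e\to\gamma_e$, so the push-forwards can be inverted in the limit and the limiting tuple is admissible for $\sfG$.

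It remains to establish lower semicontinuity of the full objective along these convergences, which I expect to be the main obstacle. The action $\calA_\sfG$ is convex and weakly lower semicontinuous for a \emph{fixed} graph by Remark~\ref{lscFixedG}, but here the domains of integration --- the edges $e^n$ and their reference measures $\lambda_{e^n}$, $\lambda_{v^n}$ --- vary with $n$. I would therefore rewrite each edge integral on the common reference interval $[0,1]$ via $\gamma^n_e$, turning $\calA_{\sfG^n}$ into a functional of the push-forward measures integrated against the push-forwards of $\lambda_{e^n}$ and $\lambda_{v^n}$, keeping track of the $n$-dependent factors $l_{e^n}$ that appear in the weak formulation. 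The compatibility hypotheses in the Assumption --- uniform boundedness of $\sum_e\|\lambda_e\|$ and $\sum_v\|\lambda_v\|$ together with $\lambda_{e^n}\rightharpoonup\lambda_e$ and $\lambda_{v^n}\rightharpoonup\lambda_v$ --- let the reference measures pass to the limit, and the joint convexity and lower semicontinuity of the integrands $\Psi_\Omega$, $\Psi_\sfG$ then give
\begin{align*}
\int_0^1\calA_\sfG(\mu_t,J_t,\rho_t,V_t,f_t,G_t)\,dt\le\liminf_{n\to\infty}\int_0^1\calA_{\sfG^n}(\mu^n_t,J^n_t,\rho^n_t,V^n_t,f^n_t,G^n_t)\,dt.
\end{align*}
Combining this with $R(x_{v_0},\dots,x_{v_{N_\sfV-1}})\le\liminf_n R(x^n_{v_0},\dots,x^n_{v_{N_\sfV-1}})$ shows that the limiting pair, consisting of the graph $\sfG$ and the tuple $(\mu_t,J_t,\rho_t,V_t,f_t,G_t)$, attains the infimum in \ref{VarFormVarG}, completing the argument. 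The delicate point throughout is to confirm that the weak lower semicontinuity of Remark~\ref{lscFixedG} survives the transport onto the fixed reference domain once the $n$-dependent lengths and reference measures are accounted for.
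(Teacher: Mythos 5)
Your proposal is correct and follows essentially the same route as the paper: a minimizing sequence of embedded graphs, Bolzano--Weierstrass on the vertex positions in the compact set $\Omega$, the compactness result of Proposition~\ref{CompVarG} for the transport variables, and lower semicontinuity of penalty and action to conclude via the direct method. You in fact spell out the lower-semicontinuity of the action along varying graphs (via push-forward to the reference domain and the compatibility assumptions on $\lambda_{e^n}$, $\lambda_{v^n}$) in more detail than the paper's own proof, which leaves that step implicit.
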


\begin{proof}
    Let $\sfG^n$ be a minimizing sequence of graphs with corresponding measures given by the sequence $(\mu^n, J^n, \rho^n, V^n, f^n, G^n)\in \CE(\sfG^n)$. Each graph is embedded with the embedding given by a family of vertices $\{x^n_{v_0},x^n_{v_1},\ldots,x^n_{v_{N_\sfV -1}}\}\subset\Omega$, defining the sequences $(x^n_{v_i})_{n\in\N}$ for $i\in\{0,\ldots, N_\sfV-1\}$. Compactness of $\Omega\subset\R^d$ allows to apply the Bolzano-Weierstrass theorem to each of these sequences. Therefore, there exists a family of limiting points $x_{v_i}\in\Omega$ for $i\in\{0,\ldots, N_\sfV-1\}$ such that, up to a subsequence, $x_{v_i}^n\to x_{v_i}$ for $i\in\{0,1,\ldots,N_\sfV-1\}$. Now Proposition \ref{CompVarG} and Proposition \ref{lscVarG} allow the application of the direct method, proving the theorem.
\end{proof}

\section{Acknowledgement}
 M.C.'s research was supported by the NWO-M1 grant Curve Ensemble Gradient Descents for Sparse Dynamic Problems (Grant Number OCENW.M.22.302).

\vspace{\baselineskip}

\bibliographystyle{alpha}
\bibliography{bibliography.bib}

\end{document}